\newtheorem{thm}{Theorem}
    \newtheorem{lmm}[thm]{Lemma}
    \newtheorem{prp}[thm]{Proposition}
    \newtheoremstyle{def} 
    {\topsep}                    
    {\topsep}                    
    {}                   
    {}                           
    {\bfseries}                   
    {.}                          
    {.5em}                       
    {}  
\theoremstyle{def}
\newtheorem{rmrk}[thm]{Remark}
\newcommand{\sm}{C^{\infty}}
\newcommand{\RR}{\mathbb{R}}
\newcommand{\oper}[1]{\operatorname{#1}}
\newcommand{\inv}{^{-1}}
\newcommand{\vm}{\mathfrak{X}}
\newcommand{\vl}{^\mathsf{v}}
\newcommand{\tl}{^\mathsf{c}}
\newcommand{\JJ}{\mathbf{J}}
\newcommand{\slt}{\mathring{T}}
\newcommand{\hll}[1]{\widetilde{#1}}
\newcommand{\ir}[1]{\mathcal{#1}}
\newcommand{\en}{_{i=1}^n}
\title{Isometries, submetries and distance coordinates on Finsler manifolds}
\author{Bernadett Aradi$^*$ and D\'avid Cs.\ Kert\'esz\footnote{The first author's research was supported by the Hungarian Academy of Sciences.\newline Both authors were supported by the T\'AMOP-4.2.2/B-10/1-2010-0024 project and the T\'AMOP 4.2.4. A/2-11-1-2012-0001 ``National Excellence Program -- Elaborating and operating an inland student and researcher personal support system convergence program''. The project is co-financed by the European Union and the European Social Fund.}}
\date{}
\begin{document}
\maketitle

\begin{center}
  {\it Dedicated to Professor Lajos Tam\'assy on the\\ occasion of his 90th birthday}
\end{center}
\begin{abstract}
This paper considers fundamental issues related to Finslerian iso\-metries, submetries, distance and geodesics. It is shown that at each point of a Finsler manifold there is a \emph{distance coordinate system}. Using distance coordinates, a simple proof is given for the Finslerian version of the Myers--Steenrod theorem and for the differentiability of Finslerian submetries.
\end{abstract}

AMS \emph{Subject Class.}\ (2010):
53B40 

\emph{Key words:} Finsler manifold, distance, geodesic, isometry, submetry, Rapcs\'ak equations

\section*{Introduction}
The main actors of this paper are isometries, distance preserving maps and geodesics of Finsler manifolds. By a \emph{Finslerian isometry} we mean a diffeomorphism between Finsler manifolds whose derivative leaves the Finsler function invariant. Every Finsler manifold carries a natural \emph{quasi-metric}, which is called the \emph{Finslerian distance}. Thus we can measure the distance of points in (connected) Finsler manifolds, and we may speak of \emph{distance preserving mappings} between them. The \emph{geodesics} of a Finsler manifold are constant speed curves that are extremals of the arc length functional. These curves locally minimize the Finslerian distance, and they can also be described by the canonical spray of the Finsler manifold.

It sounds natural that a Finslerian isometry preserves the geodesics of the Finsler manifold, however, we were having trouble finding a proof in the literature. In fact the only one we found was in the unpublished manuscript \cite{LibMo}, and it is a lengthy local calculation. For the very special case of Berwald manifolds, S.~Deng proved the proposition in his book \cite{Deng12:hom}. We provide a new proof for the general case using a characterization of the canonical spray (Proposition \ref{isoaff}). As an application of this result, we show that a Finslerian isometry is completely determined by the action of its derivative on one tangent space only (Proposition \ref{isodet}).

It is quite clear that a Finslerian isometry preserves the Finslerian distance. The converse is the Finslerian version of the Myers--Steenrod theorem, which states that a surjective distance preserving mapping between Finsler manifolds is a Finslerian isometry.
The result was first proved by F.~Brickell in 1965, and again, by S.~Deng and Z.~Hou in 2002 (see \cite{Bric65:diff} and \cite{DeHo02:isom}). However, both proofs are quite technical. For the Riemannian version of the theorem an elegant proof can be found in Petersen's book \cite{Pet00:Riem}. His argument uses special coordinates called \emph{distance coordinates}, where the coordinates of a point are the point's distances from some given points. We carried out this idea to the Finslerian setting. The difficult part was to show that distance coordinate systems in a Finsler manifold exist. Then the smoothness of distance preserving maps follows easily, and this significantly simplifies the proof in \cite{Bric65:diff} and \cite{DeHo02:isom}.

As a further application of distance coordinates, we show that regular submetries between reversible Finsler manifolds are differentiable (Theorem \ref{thm:Beres}). This result is known in the more general setting of metric spaces (see \cite{Lyt}). We believe, however, that our proof is more accessible to researchers working in differential geometry.

The paper is organized as follows. In the first section we collect our notations and conventions about manifolds and quasi-metric spaces. Next we discuss sprays and the exponential map determined by a spray, these are essential tools for our later arguments. The remaining sections are devoted to proving our results.

\section{Preliminaries}
\noindent{\bf(1)} \ Throughout the paper $M$ denotes a connected, second countable, smooth Hausdorff manifold with finite dimension $n$. The tangent bundle of $M$ is denoted by $\tau\colon TM\to M$, and we use the notation $\slt M$ for the bundle of nonzero tangent vectors to $M$. The tangent bundle of the tangent manifold $TM$ is $\tau_{TM}\colon TTM\to TM$.

We denote by $\sm(M)$ the real algebra of smooth functions on $M$, and by $\vm(M)$ the $\sm(M)$-module of vector fields on $M$. If $M,N$ are smooth manifolds and $\varphi\colon M\to N$ is a smooth mapping, then its derivative is $\varphi_*\colon TM\to TN$. Two vector fields $X\in \vm(M)$ and $Y\in \vm(N)$ are $\varphi$-related if $\varphi_*\circ X=Y \circ \varphi$. The vertical and the complete lifts of $X$ into $TM$ are denoted by $X\vl$ and $X\tl$, respectively. There exists a unique $(1,1)$ tensor field $\JJ$ on $TM$ such that $\JJ X\vl=0$ and $\JJ X\tl=X\vl$ for any vector field $X\in \vm(M)$. This tensor is called the vertical endomorphism of $\vm(TM)$. For the canonical radial vector field (or Liouville vector field) on $TM$ we use the notation $C$; it is generated by the positive dilations of $TM$.

The exterior derivative is denoted by $d$, and $i_X$ stands for the substitution operator associated to a vector field $X\in \vm(M)$. It acts on a tensor field $A$ of type $(0,k)$ or $(1,k)$ by $i_XA(X_1,\dots,X_{k-1}):=A(X,X_1,\dots,X_{k-1})$.\bigskip

\noindent{\bf(2)} \ Let $H$ be an arbitrary set. A function $\varrho\colon H\times H\to \RR,\ (p,q)\mapsto \varrho(p,q)$ is a \emph{quasi-distance} on $H$ if it is non-negative, $\varrho(p,q)=0$ if and only if $p=q$, satisfies the triangle inequality, and the \emph{forward metric balls}
\[
B_\varrho^+(a,r):=\{p\in H\mid \varrho(a,p)<r\}
\]
generate the same topology as the \emph{backward metric balls}
\[
B_\varrho^-(a,r):=\{p\in H\mid \varrho(p,a)<r\}
\]
where $a\in H$ and $r$ is a positive number. In this case $(H,\varrho)$ is said to be a \emph{quasi-metric space} \cite{Pat05}. Later on, we need only forward metric balls denoted simply by $B_\varrho(a,r)$. Furthermore, let $S_\varrho(a,r):=\{p\in H\mid \varrho(a,p)=r\}$ denote the sphere around $a$ with radius $r$.

Consider two quasi-metric spaces $(H_1,\varrho_1)$ and $(H_2,\varrho_2)$. A mapping $\varphi$ from $H_1$ to $H_2$ is \emph{distance preserving}, if
\[
\varrho_1(p,q)=\varrho_2(\varphi(p),\varphi(q));\quad p,q\in H_1.
\]
We say that $\varphi\colon H_1\to H_2$ is a \emph{submetry}, if it satisfies the following weaker condition: for any $p$ in $H_1$, there is a positive number $\delta$ such that for every $\varepsilon\in\left]0,\delta\right[$ we have
\[
\varphi(B_{\varrho_1}(p,\varepsilon))=B_{\varrho_2}(\varphi(p),\varepsilon).
\]
For each $p\in H_1$, the supremum of these positive numbers $\delta$ will be denoted by $\delta_p$. We say that a submetry is \emph{regular} if for any compact set $K\subset H_1$ we have
\[
\delta_K:=\inf_{p\in K}\delta_p>0.
\]
The following properties of submetries can be immediately deduced from the definition:
\begin{enumerate}[(i)]
\itemsep-3pt
\item submetries are continuous;
\item composition of submetries is a submetry;
\item composition of regular submetries is a regular submetry.
\end{enumerate}

\section{Spray manifolds and the exponential map}
A mapping $S\colon TM\to TTM$ is said to be a \emph{spray} for $M$ if it is a section of the double tangent bundle $\tau_{TM}\colon TTM\to TM$, smooth on $\slt M$, positive-homogeneous of degree $2$ and satisfies $\JJ S=C$. In this case the pair $(M,S)$ is called a \emph{spray manifold}.

By a \emph{geodesic} of a spray $S$ we mean a smooth curve $\gamma\colon I\to M$ whose velocity field is an integral curve of $S$, that is, $S\circ \dot{\gamma}=\ddot{\gamma}$. Given a vector $v$ in $TM$, there exists a unique maximal geodesic $\gamma_v\colon I\to M$ such that $0\in I$ and $\dot\gamma_v(0)=v$. The $2^+$-homogeneity of $S$ implies that if $s$ and $t$ are positive numbers such that $\gamma_v$ is defined at $st$, then $\gamma_{t v}$ is defined at $s$ and
\begin{equation}\label{eq:rescale}
\gamma_{t v}(s)=\gamma_v(s t).
\end{equation}

Let $\hll{TM}$ be the set of tangent vectors $v\in TM$ such that $\gamma_v$ is defined at $1$. Then the \emph{exponential map} determined by $S$ is the mapping
\[
\exp\colon\hll{TM}\to M,\quad v\mapsto\exp(v):=\gamma_v(1).
\]
The set $\hll{TM}$ is open in $TM$ \cite[p.~91]{Lang95:diff}, and, by the smooth dependence on initial conditions, $\exp$ is smooth on $\hll{TM}\cap\slt M$. Applying \eqref{eq:rescale}, we see that the curve $t\in[0,1]\mapsto \exp(tv)$ is a geodesic of $S$ with initial velocity $v$.

The restriction of the exponential map to the tangent space at a certain point $p$ is denoted by $\exp_p$. The following result is well-known, its proof can be found, e.g., in \cite[p.~222]{Shen01:Spra}.
\begin{lmm}\label{thm:Wh1}
If $\exp$ is the exponential map determined by a spray, then for any $p\in M$,
\begin{itemize}
\itemsep-3pt
\item[\textup{(i)}] $\exp_p$ is of class $C^1$ on $T_pM\cap \hll{TM}$;
\item[\textup{(ii)}] $((\exp_p)_*)_{0_p}$ is the canonical isomorphism which identifies $T_{0_p}T_pM$ with $T_pM$.
\end{itemize}
\end{lmm}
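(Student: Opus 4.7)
The plan: Smoothness of $\exp$ on $\hll{TM}\cap\slt M$ has already been recorded, so only the behaviour of $\exp_p$ at $0_p$ remains in question. I would handle (ii) first, since it forces the value of the derivative and so guides the proof of (i).

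For (ii), the homogeneity identity \eqref{eq:rescale} with $s=1$ gives $\exp_p(tv)=\gamma_{tv}(1)=\gamma_v(t)$ for every $v\in T_pM\cap\hll{TM}$ and $t\in[0,1]$. Differentiating at $t=0$ yields $\partial_t\big|_{t=0}\exp_p(tv)=\dot\gamma_v(0)=v$, so all directional derivatives of $\exp_p$ at $0_p$ exist, and under the canonical identification $T_{0_p}T_pM\cong T_pM$ they reproduce the identity. Hence, once differentiability at $0_p$ is established, the derivative there is automatically the identity.

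For (i), smoothness away from $0_p$ is already given, so I would prove differentiability at $0_p$ together with the continuity of the Jacobian there. Working in a chart $(U,(x^i))$ centred at $p$ with induced bundle coordinates $(x^i,y^i)$ on $TU$, the spray takes the form $S=y^i\partial_{x^i}-2G^i(x,y)\partial_{y^i}$, where the $G^i$ are smooth on $\slt U$ and positively $2$-homogeneous in $y$. Writing $\sigma_v(t):=\exp_p(tv)$ and integrating $\ddot\sigma_v^i+2G^i(\sigma_v,\dot\sigma_v)=0$ twice on $[0,1]$ yields
\begin{equation*}
\exp_p(v)^i - p^i - v^i = -2\int_0^1(1-s)\,G^i\bigl(\sigma_v(s),\dot\sigma_v(s)\bigr)\,ds.
\end{equation*}
The relation $G^i(x,\lambda y)=\lambda^2 G^i(x,y)$ for $\lambda>0$ lets one extend $G^i$ continuously to the zero section by $G^i(x,0)=0$ and bounds it by $C|y|^2$ on a compact neighbourhood of $(p,0)$. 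Since $\sigma_v,\dot\sigma_v$ depend continuously on $v$ on a uniform interval of existence with $|\dot\sigma_v|=O(|v|)$ for small $v$, the integral is $O(|v|^2)$, and so $\exp_p(v)=p+v+O(|v|^2)$. This proves differentiability at $0_p$ with derivative the identity.

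The main obstacle is the continuity of $(\exp_p)_*$ at $0_p$, namely showing $((\exp_p)_*)_v\to\mathrm{id}$ as $v\to 0$ through nonzero vectors (where off $0_p$ the Jacobian is only known to be smooth). Given $w\in T_pM$ and small nonzero $v$, the vector $((\exp_p)_*)_v(w)$ equals $J_v(1)$ for the Jacobi variation field $J_v(t):=\partial_u\big|_{u=0}\exp_p\bigl(t(v+uw)\bigr)$, which satisfies a linear second-order ODE along $\sigma_v$ whose coefficients are built from the first partial derivatives of the $G^i$. By $2$-homogeneity these partials are $2$- or $1$-homogeneous in $y$ and hence $O(|v|^2)$ or $O(|v|)$ along $\sigma_v$ for small $v$, so a Gronwall-type estimate forces $J_v(t)\to tw$ uniformly on $[0,1]$; specialising to $t=1$ delivers the required continuity of $(\exp_p)_*$ at $0_p$.
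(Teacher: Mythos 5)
The paper does not actually prove this lemma; it only cites Shen's book, so your argument is being measured against the standard proof there rather than against anything in the text. Your proof is essentially correct and is a hands-on reconstruction of that standard argument: the whole content of the lemma is concentrated at the zero vector, and everything reduces to the positive $2$-homogeneity of the spray coefficients $G^i$. Your treatment of (ii) via $\exp_p(tv)=\gamma_v(t)$, the integral representation giving $\exp_p(v)=p+v+O(|v|^2)$, and the Gronwall estimate on the variational (Jacobi) equation showing $((\exp_p)_*)_v\to\mathrm{id}$ are all sound. Two points deserve to be made explicit rather than asserted. First, the bound $|\dot\sigma_v|=O(|v|)$ follows from the rescaling identity \eqref{eq:rescale}: writing $v=|v|u$ with $u$ in the unit sphere gives $\dot\gamma_v(t)=|v|\,\dot\gamma_u(|v|t)$, and compactness of the sphere supplies the uniform constant. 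Second, the ``continuous dependence on $v$ on a uniform interval of existence'' near the zero section itself needs the spray to be at least locally Lipschitz across the zero section; this is again a consequence of homogeneity, since $G^i=O(|y|^2)$ and $\partial G^i/\partial x^j=O(|y|^2)$, $\partial G^i/\partial y^j=O(|y|)$ show that each $G^i$ extends to a $C^1$ function on all of $TU$ whose first derivatives vanish on the zero section. Once you observe this last fact, there is a shorter route to (i): the spray is globally $C^1$ on $TM$, so its flow (hence $\exp$) is $C^1$ by standard ODE theory, and your Gronwall computation becomes unnecessary; but your more explicit estimate buys the same conclusion and additionally exhibits the rate at which the Jacobian approaches the identity.
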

From this result, it follows that there is a neighbourhood of $0_p\in T_pM$ on which $\exp_p$ is a $C^1$ diffeomorphism onto its image.
Suppose that we have such a neighbourhood which is also star-shaped with respect to the origin. Then its image under $\exp_p$ is called a \emph{normal neighbourhood} of $p$. A normal neighbourhood $\ir U$ of $p$ has the nice property that each of its points lies on a geodesic starting from $p$, contained in $\ir U$, and all such geodesics differ only in a reparametrization.

An open subset of a spray manifold is called \emph{totally normal}, if it is a normal neighbourhood of each of its points. A fundamental result of J.~H.~C.~Whitehead \cite{Whit33:totnor} assures that in a spray manifold any point has a totally normal neighbourhood. It is worth mentioning that later R.~E.~Traber \cite{Trab37:fund} gave a simpler proof for this assertion.
\bigskip

The following lemma will be useful at the construction of distance coordinates in Section 4.

\begin{lmm}\label{lmm:assocp}
Let $(M,S)$ be a spray manifold, $p$ a point in $M$. Then for any nonzero $v\in T_pM$, there is a point $q\neq p$ in $M$ and a nonzero $w\in T_qM$ satisfying the following conditions:
\begin{enumerate}
\itemsep-3pt
\item[\textup{(i)}] $p$ is in a normal neighbourhood of $q$;
\item[\textup{(ii)}] the geodesic $\gamma_w\colon t\mapsto\exp_q(tw)$ runs through $p$, and its velocity at $p$ is $\lambda v$ for some $\lambda>0$.
\end{enumerate}
\end{lmm}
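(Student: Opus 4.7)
The idea is to run the geodesic $\gamma_v$ \emph{backwards} in time by a small amount, and let $q$ be a point on this backward extension. Concretely, since $v\neq 0$, the maximal geodesic $\gamma_v$ with $\dot\gamma_v(0)=v$ is a nonconstant integral curve of the spray $S$, defined on some open interval $(-a,b)$ containing $0$ with $a,b>0$. By Whitehead's theorem I first fix a totally normal neighbourhood $\ir W$ of $p$. By continuity of $\gamma_v$, there is an $\varepsilon\in\left]0,a\right[$ with $q:=\gamma_v(-\varepsilon)\in \ir W$, and $q\neq p$ because $\gamma_v$ is nonconstant in a neighbourhood of $0$ (its derivative there is $v\neq 0$). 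Since $\ir W$ is totally normal, it is a normal neighbourhood of $q$, and $p\in \ir W$ gives condition (i).

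\textbf{Choice of $w$ and verification of (ii).} Set
\[
w:=\varepsilon\,\dot\gamma_v(-\varepsilon)\in T_qM,
\]
which is nonzero because $\dot\gamma_v$ is an integral curve of the spray $S$ and $S$ vanishes only on the zero section. Uniqueness of maximal integral curves of $S$ together with the $2^+$-homogeneity formula \eqref{eq:rescale} gives, for all $t$ in the appropriate range,
\[
\gamma_w(t)=\gamma_{\varepsilon\dot\gamma_v(-\varepsilon)}(t)=\gamma_{\dot\gamma_v(-\varepsilon)}(\varepsilon t)=\gamma_v(\varepsilon t-\varepsilon).
\]
In particular $\exp_q(w)=\gamma_w(1)=\gamma_v(0)=p$, so the geodesic $t\mapsto\exp_q(tw)$ does pass through $p$ at $t=1$. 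Differentiating the identity $\gamma_w(t)=\gamma_v(\varepsilon t-\varepsilon)$ at $t=1$ yields
\[
\dot\gamma_w(1)=\varepsilon\,\dot\gamma_v(0)=\varepsilon\,v,
\]
which gives condition (ii) with $\lambda=\varepsilon>0$.

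\textbf{Where the subtlety lies.} The only point that required care is that sprays are only $2^+$-homogeneous, so ``reversing a geodesic'' is not available directly: $-v$ would produce an entirely different curve. The remedy is to integrate the spray vector field $S$ backward in \emph{time} (which is perfectly legitimate for an ODE) rather than to reverse the direction of the initial vector. This is why we recover the velocity $\varepsilon v$ at $p$ as a genuine positive rescaling of $v$, and not something involving $-v$. Once this is set up, (i) is immediate from Whitehead's theorem and (ii) is a one-line computation from the reparametrisation formula.
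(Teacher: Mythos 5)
Your proof is correct and follows essentially the same route as the paper's: run the geodesic backwards into a totally normal neighbourhood of $p$ to obtain $q$, then positively rescale the velocity there and use the reparametrisation formula \eqref{eq:rescale} to get $w$. The only (immaterial, for the statement as written) difference is that the paper chooses the scaling factor $\lambda$ so that $w$ lies in the star-shaped preimage of the normal neighbourhood under $\exp_q$, whereas you normalise so that $p$ is reached at parameter $t=1$.
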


\begin{proof}
Let $v\in T_pM$ be a nonzero tangent vector and consider the geodesic $\gamma_v$ with velocity $v$ at $0$. It is defined on an open interval containing $[-\delta,\delta]$ for some $\delta>0$. Since $\gamma_v$ is continuous, we may suppose that $\gamma_v([-\delta,\delta])$ is contained in a totally normal neighbourhood $\ir U$ of $p$. So if we set $q:=\gamma_v(-\delta)$ and take into account that $\ir U$ is a normal neighbourhood of any of its points, we obtain (i).

To prove (ii), let $\ir V$ be an open subset in $T_qM$ such that $\exp_q\upharpoonright \ir V$ is a diffeomorphism onto $\ir U$ and let $w_0:=\dot\gamma_v(-\delta)$. There exists a positive number $\lambda$ such that $w:=\lambda w_0\in(\exp_q\upharpoonright\ir V)\inv(\ir U)\subset T_qM$. Then the geodesic
\[
\gamma_w\colon t\mapsto \gamma_w(t)=\gamma_{tw}(1)=\exp_q(tw)
\]
is a positive affine reparametrization of $\gamma_v$ expressed by $\gamma_w(t)=\gamma_v(\lambda t-\delta)$. Indeed, both $\gamma_w$ and $t\mapsto \gamma_v(\lambda t-\delta)$ are geodesics with common initial velocity $\dot\gamma_w(0)=w=\lambda \dot\gamma_v(-\delta)$, thus they must coincide. Hence
\[
\gamma_w\left(\frac\delta\lambda\right)=\gamma_v(0)=p \ \mbox{ and } \ \dot\gamma_w\left(\frac\delta\lambda\right)=\lambda\dot\gamma_v(0)=\lambda v,
\]
as it was to be shown.
\end{proof}

In the situation described by the lemma we say that $q$ is an \emph{emanating point} of the vector $v$. Given a vector $v\in T_pM$, from the construction above we see that any neighbourhood of $p$ contains an emanating point of $v$.

\section{Finsler functions and their isometries}
A continuous function $F\colon TM\to \RR$ is called a \emph{Finsler function} (for $M$) if it is smooth on $\slt M$, positive-homogeneous of degree $1$, positive on the nonzero tangent vectors and \emph{elliptic}, that is, if $F_p:=F\upharpoonright T_pM$, then the symmetric bilinear form $(F_p^2)''(v)$ is positive definite for all $p\in M$ and $v\in \slt_pM$. In this case we say that the pair $(M,F)$ is a \emph{Finsler manifold}. The function $E=\frac{1}{2}F^2$ is said to be the \emph{energy function} of $(M,F)$ or simply of $F$. A Finsler function $F$ is \emph{reversible} if $F(-v)=F(v)$ for all $v\in \slt M$.

Let $(M,F)$ be a Finsler manifold. There exists a unique spray $S$ on $TM$ such that $i_Sd(dE\circ \JJ)=-dE$. This spray is called the \emph{canonical spray} of $(M,F)$. A smooth curve $\gamma\colon I\to M$ is called a \emph{geodesic} of the Finsler manifold if it is a geodesic of its canonical spray. By the \emph{exponential map} determined by a Finsler function we mean the exponential map determined by its canonical spray.

The following lemma characterizes the canonical spray of a Finsler manifold.

\begin{lmm}\label{lmm:canspr}
  Let $(M,F)$ be a Finsler manifold. For a spray $S\colon TM\to TTM$ the following conditions are equivalent:
  \begin{itemize}
  \itemsep-3pt
    \item[\textup{(i)}] $S$ is the canonical spray of $(M,F)$;
    \item[\textup{(ii)}] $S(X\vl E)-X\tl E=0$ for all $X\in \vm(M)$;
    \item[\textup{(iii)}] $SF=0$ and $S(X\vl F)-X\tl F=0$ for all $X\in \vm(M)$.
  \end{itemize}
\end{lmm}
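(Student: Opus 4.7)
The plan is to break the equivalence into (i)$\Leftrightarrow$(ii), which I would treat as a one-form identity tested on lifts, and (ii)$\Leftrightarrow$(iii), which follows from the chain rule applied to $E=F^2/2$.

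For (i)$\Leftrightarrow$(ii), I would start by simplifying (i). Since $\JJ S=C$ and $CE=2E$ by Euler's identity for the $2^+$-homogeneous function $E$, we have $i_S(dE\circ\JJ)=dE(C)=2E$. Cartan's formula $i_Sd\omega=\mathcal{L}_S\omega-d\,i_S\omega$ therefore recasts (i) as the single one-form equation
\[
\mathcal{L}_S(dE\circ\JJ)=dE.
\]
Because complete and vertical lifts jointly furnish a local frame of $\vm(TM)$ over $\sm(TM)$, this is equivalent to its evaluations on $X\tl$ and $X\vl$ for every $X\in\vm(M)$. The crucial ingredients are the two bracket identities
\[
\JJ[S,X\tl]=0, \qquad \JJ[S,X\vl]=-X\vl,
\]
both of which come from the standard fact $\mathcal{L}_{X\tl}\JJ=\mathcal{L}_{X\vl}\JJ=0$: indeed, $0=(\mathcal{L}_{X\tl}\JJ)(S)=[X\tl,C]-\JJ[X\tl,S]$, and $X\tl$ being $0^+$-homogeneous gives $[X\tl,C]=0$; the $X\vl$-case is analogous, using $[X\vl,C]=X\vl$. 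Substituting into the displayed equation, evaluation on $X\tl$ collapses to $S(X\vl E)=X\tl E$, which is (ii), while evaluation on $X\vl$ reduces to the tautology $X\vl E=X\vl E$.

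For (ii)$\Leftrightarrow$(iii) I would work on $\slt M$, where $F$ is smooth. The identity $E=F^2/2$ gives $X\vl E=F\cdot X\vl F$ and $X\tl E=F\cdot X\tl F$, so the Leibniz rule yields
\[
S(X\vl E)-X\tl E=(SF)(X\vl F)+F\bigl(S(X\vl F)-X\tl F\bigr),
\]
making (iii)$\Rightarrow$(ii) immediate. For the converse I would first deduce $SE=0$ from (ii): in local coordinates $(x^i,y^i)$ with $S=y^i\partial_i-2G^i\dot\partial_i$, applying $S$ to Euler's identity $y^i\dot\partial_i E=2E$ gives $2SE=S(y^i)\dot\partial_i E+y^iS(\dot\partial_i E)=-2G^i\dot\partial_i E+y^i\partial_i E$, where the last step invokes (ii) with $X=\partial_i$. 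But $SE=y^i\partial_i E-2G^i\dot\partial_i E$, so $2SE=SE$, hence $SE=0$. On $\slt M$ this forces $SF=0$ (as $SE=F\cdot SF$ and $F>0$), and dividing the displayed identity by $F$ then yields $S(X\vl F)=X\tl F$.

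The main obstacle is securing the two bracket identities $\JJ[S,X\tl]=0$ and $\JJ[S,X\vl]=-X\vl$; once these are in hand, everything else is bookkeeping with Cartan's formula, Euler's identity, and the chain rule.
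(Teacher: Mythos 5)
Your argument is correct, and it differs from the paper's in two substantive ways. First, for (i)$\Leftrightarrow$(ii) the paper does not argue at all: it simply cites the proof of Fact~3 in \cite{SzLK11}. You instead supply a self-contained derivation via Cartan's formula, reducing (i) to $\mathcal{L}_S(dE\circ\JJ)=dE$ and testing it on the frame of complete and vertical lifts; the two bracket identities $\JJ[S,X\tl]=0$ and $\JJ[S,X\vl]=-X\vl$ that you flag as the crux are indeed standard consequences of $\mathcal{L}_{X\tl}\JJ=\mathcal{L}_{X\vl}\JJ=0$ together with $[X\tl,C]=0$ and $[X\vl,C]=X\vl$, so this part is sound (it is essentially the content of the cited Fact, so you have made the lemma self-contained where the paper outsources it). Second, to get $SF=0$ in the direction (ii)$\Rightarrow$(iii), the paper goes back through (i) and exploits the antisymmetry of $d(dE\circ\JJ)$: $0=d(dE\circ\JJ)(S,S)=-SE=-F(SF)$, which is a one-line coordinate-free argument; you instead differentiate Euler's identity along $S$ in induced coordinates and feed in (ii) with $X=\partial_i$ to conclude $2SE=SE$. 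Both work; the paper's is shorter, yours avoids invoking (i) and stays entirely within (ii). (A small point worth noting in your coordinate step: (ii) is stated for global $X\in\vm(M)$, so applying it to the local fields $\partial_i$ tacitly uses that $X\mapsto S(X\vl E)-X\tl E$ is $\sm(M)$-linear in $X$ --- which it is, since $S f\vl=f\tl$ for a spray --- or a bump-function extension.) The (ii)$\Leftrightarrow$(iii) Leibniz computation itself is identical to the paper's.
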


\begin{proof}
  The equivalence of (i) and (ii) follows from the proof of Fact 3 in \cite{SzLK11}.
  
  We continue by showing that (i) (and hence (ii)) implies $SF=0$. Indeed, since $d(dE\circ\JJ)$ is an alternating form, we have 
  \[
  0=d(dE\circ\JJ)(S,S)=(i_Sd(dE\circ\JJ))(S)=-dE(S)=-SE=-F(SF).
  \]
  Finally, by the computation below it follows that (ii) and (iii) are equivalent:
  \begin{align*}
    S(X\vl E)-X\tl E&=S(FX\vl F)-FX\tl F\\&=(SF)(X\vl F)+F(S(X\vl F)-X\tl F).
  \end{align*}
\end{proof}

In the previous lemma one of the \emph{Rapcs\'ak equations} \cite{SzV} appears: condition (iii) states that the canonical spray is characterized by a Rapcs\'ak equation and by $SF=0$. This is not surprising, since the Rapcs\'ak equations express that a given spray is projectively equivalent to the canonical spray of a Finsler manifold.

Let $(M,F)$ and $(N,\bar F)$ be Finsler manifolds. A smooth mapping $\varphi$ of $M$ onto $N$ is said to be a \emph{Finslerian isometry} if it is a diffeomorphism and its derivative preserves the Finsler norms of tangent vectors, i.e., $\bar F\circ \varphi_*=F$. The mapping $\varphi\colon M\to N$ is called a \emph{local Finslerian isometry} if every point has a neighbourhood on which $\varphi$ is a Finslerian isometry.
\bigskip

Now we show that a Finslerian isometry preserves the geodesics of the Finsler manifold. The Riemannian version of the proposition is well-known, but, for example, in his book Deng proved the result (as well as its corollary Proposition \ref{isodet} below) for the special case of Berwald manifolds only \cite[Theorem 5.1. and Theorem 5.2.]{Deng12:hom}. In our proof we apply the characterization of the canonical spray stated in Lemma \ref{lmm:canspr}.

\begin{prp}\label{isoaff}
Let $(M,F)$ be a Finsler manifold, $\gamma\colon I \to M$ a geodesic of $F$ and $\varphi\colon M \to M$ a Finslerian isometry. Then $\varphi \circ \gamma$ is a geodesic as well, that is, isometries preserve geodesics.
\end{prp}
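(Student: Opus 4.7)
The plan is to show that the canonical spray $S$ of $F$ is $\varphi_*$-related to itself, i.e.\ that $(\varphi_*)_*\circ S=S\circ\varphi_*$. Once this is in hand, for any curve $\gamma$ set $\beta:=\varphi\circ\gamma$; then $\dot\beta=\varphi_*\circ\dot\gamma$, and differentiating once more gives $\ddot\beta=(\varphi_*)_*\circ\ddot\gamma=(\varphi_*)_*\circ S\circ\dot\gamma=S\circ\varphi_*\circ\dot\gamma=S\circ\dot\beta$, so $\beta$ is a geodesic whenever $\gamma$ is. The whole argument thus reduces to proving the self-relatedness of $S$.

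To this end I would introduce the ``transported'' map
\[
\tilde S:=(\varphi_*^{-1})_*\circ S\circ\varphi_*\colon TM\to TTM
\]
and show $\tilde S=S$ via Lemma \ref{lmm:canspr}. Since $\varphi_*\colon TM\to TM$ is a fibrewise linear diffeomorphism preserving $\slt M$ (because $F\circ\varphi_*=F$), the map $\tilde S$ is a section of $\tau_{TM}$, smooth off the zero section, and positive-homogeneous of degree $2$; the remaining axiom $\JJ\tilde S=C$ is a naturality statement for $\JJ$ and $C$ under $\varphi_*$, which I expect to be the main technical obstacle but reduces to the observation that $(\varphi_*)_*$ respects the vertical subbundle of $TTM$ and the Liouville vector field on $TM$.

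I would then verify condition (iii) of Lemma \ref{lmm:canspr} for $\tilde S$. From $F\circ\varphi_*^{-1}=F$ we obtain, for each $v\in TM$,
\[
(\tilde SF)(v)=S(\varphi_*(v))[F\circ\varphi_*^{-1}]=(SF)(\varphi_*(v))=0.
\]
Setting $Y:=\varphi_*X\in\vm(M)$, the lifts $X\vl,Y\vl$ and $X\tl,Y\tl$ are $\varphi_*$-related, and combined with the isometry identity this yields the transfer formulas $X\vl F=Y\vl F\circ\varphi_*$ and $X\tl F=Y\tl F\circ\varphi_*$. Applying these together with the fact that $S$ already satisfies (iii),
\[
\tilde S(v)(X\vl F)=S(\varphi_*(v))[Y\vl F]=(Y\tl F)(\varphi_*(v))=(X\tl F)(v),
\]
so $\tilde S(X\vl F)-X\tl F=0$. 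Thus $\tilde S$ is a canonical spray for $F$, and by uniqueness $\tilde S=S$, which is exactly the self-relatedness needed for the geodesic conclusion.
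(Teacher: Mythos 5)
Your proposal is correct and follows essentially the same route as the paper: both conjugate the canonical spray by $\varphi_*$, verify that the resulting spray satisfies one of the equivalent characterizations in Lemma \ref{lmm:canspr} via the $\varphi_*$-relatedness of vertical and complete lifts, and conclude $\varphi_{**}\circ S=S\circ\varphi_*$ by uniqueness. The only (immaterial) differences are the direction of conjugation and that you check condition (iii) with $F$ where the paper checks condition (ii) with $E$.
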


\begin{proof}
Let $S$ denote the canonical spray of $(M,F)$, and let $\hll{\gamma}:=\varphi \circ \gamma$, for brevity. Then we have to show that $S\circ \dot{\gamma}=\ddot{\gamma}$ implies $S \circ \dot{\hll{\gamma}}=\ddot{\hll{\gamma}}$. We prove that
\begin{equation}\label{spr}
\varphi_{**}\circ S=S\circ \varphi_*,
\end{equation}
because then we obtain
\[
\ddot{\hll{\gamma}}=\varphi_{**}\circ \ddot{\gamma}\overset{\textrm{cond.}}{=}\varphi_{**}\circ S\circ \dot{\gamma}\overset{\eqref{spr}}{=}S\circ \varphi_* \circ \dot{\gamma}
=S\circ \dot{\hll{\gamma}},
\]
which completes the proof.

To prove \eqref{spr}, we introduce a mapping $\hll{S}\colon TM\to TTM$ defined by
\[
\hll{S}:=\varphi_{**}\circ S\circ \varphi_*\inv,
\]
and show that it is the canonical spray of $F$. Then, by the uniqueness of the canonical spray, $S=\hll{S}$ and hence \eqref{spr} follows.

The mapping $\hll{S}\colon TM\to TTM$ is clearly a spray, since it is the push-forward of a spray by a diffeomorphism. For a fixed $X\in \vm(M)$, define the vector field $Y:=\varphi_*\inv\circ X\circ \varphi\in \vm(M)$. Then $Y\tl=\varphi_{**}\inv \circ X\tl \circ \varphi_*$ and $Y\vl=\varphi_{**}\inv \circ X\vl \circ \varphi_*$, (see \cite[Section 5]{PSz}), which mean that $Y\tl$ and $X\tl$ are $\varphi_*$-related, as well as $Y\vl$ and $X\vl$. Thus, by the well-known characterization of $\varphi_*$-relatedness (see, e.g., \cite[p.~109, Lemma~5]{GHV1}),
\begin{align}\label{teljes}
Y\tl E \circ \varphi_*\inv=Y\tl(E\circ \varphi_*) \circ\varphi_*\inv=(X\tl E\circ\varphi_*)\circ\varphi_*\inv=X\tl E,
\end{align}
where at the first step we used the fact that $\varphi$ is a Finslerian isometry and hence $E\circ \varphi_*=E$. We obtain $X\vl E \circ \varphi_*=Y\vl(E\circ \varphi_*)=Y\vl E$ with a similar computation, which yields
\begin{equation}\label{vert}
\begin{aligned}
&\hll{S}(X\vl E)=(\varphi_{**}\circ S\circ \varphi_*\inv)(X\vl E)\\
&\quad=(S\circ \varphi_*\inv)(X\vl E\circ \varphi_*)=(S\circ \varphi_*\inv)(Y\vl E).
\end{aligned}
\end{equation}
Finally,
\begin{align*}
X\tl E-\hll{S}(X\vl E)&\overset{\eqref{teljes},\eqref{vert}}{=}Y\tl E\circ\varphi_*\inv-(S\circ \varphi_*\inv)(Y\vl E)\\
&\hspace{7.5pt}=(Y\tl E-S(Y\vl E))\circ \varphi_*\inv\overset{\textrm{Lemma \ref{lmm:canspr}}}{=}0,
\end{align*}
since $S$ is the canonical spray of $F$. This implies, by Lemma \ref{lmm:canspr} again, that $\hll{S}$ is a canonical spray of $F$ as well, as wanted.
\end{proof}

\begin{rmrk}\label{lociso}
The previous proposition remains true if we consider a \emph{local} Finslerian isometry. Furthermore, the result also holds if $\varphi$ is a (local) Finslerian isometry between two (possibly) different Finsler manifolds.
\end{rmrk}

As a simple application of Proposition \ref{isoaff} we show that a local Finslerian isometry is determined by the action of its derivative on a single tangent space. For the Riemannian case, see, e.g., \cite[p.~91]{ONeill}.

\begin{prp}\label{isodet}
  Let $\varphi,\psi\colon M\to M$ be local isometries of the (connected) Finsler manifold $(M,F)$. If there exists a point $p\in M$ such that $(\varphi_*)_p=(\psi_*)_p$, then $\varphi=\psi$ on $M$.
\end{prp}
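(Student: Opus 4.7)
The plan is to exploit the connectedness of $M$ via a standard open-closed argument applied to the set
\[
A := \{q \in M : \varphi(q)=\psi(q) \text{ and } (\varphi_*)_q=(\psi_*)_q\}.
\]
If $A$ is non-empty, open and closed in $M$, then $A = M$, which is exactly the desired conclusion.

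Non-emptiness is essentially the hypothesis: note that the equality $(\varphi_*)_p=(\psi_*)_p$ of the linear isomorphisms $T_pM \to T_{\varphi(p)}M$ and $T_pM \to T_{\psi(p)}M$ already forces $\varphi(p)=\psi(p)$, since applying both sides to any nonzero $v \in T_pM$ produces a nonzero vector that must live in a single tangent space. Closedness is immediate from the continuity of $\varphi$, $\psi$, $\varphi_*$ and $\psi_*$: if $q_n \to q$ with $q_n \in A$, taking limits in $\varphi(q_n)=\psi(q_n)$ and in $(\varphi_*)_{q_n}(X(q_n))=(\psi_*)_{q_n}(X(q_n))$ for any local smooth vector field $X$ near $q$ yields $q \in A$.

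The substantive step is openness. Fix $q \in A$ and, invoking Whitehead's theorem cited in Section 2, choose a normal neighbourhood $\ir U$ of $q$ together with a star-shaped open set $\ir V \subset T_qM$ such that $\exp_q\colon \ir V \to \ir U$ is a diffeomorphism. For any $r \in \ir U$ write $r = \exp_q(v)$ with $v \in \ir V$; then $\gamma\colon t \mapsto \exp_q(tv)$ is a geodesic of $F$ on $[0,1]$ from $q$ to $r$. By Proposition \ref{isoaff}, together with Remark \ref{lociso} to cover the local case, both $\varphi \circ \gamma$ and $\psi \circ \gamma$ are geodesics of $F$; since they share the initial point $\varphi(q)=\psi(q)$ and initial velocity $(\varphi_*)_q(v) = (\psi_*)_q(v)$, uniqueness of geodesics forces them to coincide on $[0,1]$. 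Evaluating at $t=1$ gives $\varphi(r)=\psi(r)$, so $\varphi = \psi$ on the entire open set $\ir U$, and differentiating this identity yields $(\varphi_*)_r = (\psi_*)_r$ for every $r \in \ir U$. Hence $\ir U \subset A$.

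The main obstacle --- really more of a subtlety --- is the passage from pointwise agreement of the derivative at $q$ to agreement of the two maps on a whole open neighbourhood, rather than merely along individual geodesics emanating from $q$. The exponential map of the canonical spray, combined with the existence of a normal neighbourhood, bridges exactly this gap, since every point of $\ir U$ is exhibited as the endpoint of a unique geodesic issuing from $q$.
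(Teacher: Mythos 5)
Your proposal is correct and follows essentially the same route as the paper: an open--closed argument on the set where the derivatives agree, with openness obtained by writing each point of a normal neighbourhood as $\exp_q(v)$ and using Proposition \ref{isoaff} (with Remark \ref{lociso}) plus uniqueness of geodesics with prescribed initial velocity. Your explicit remark that $(\varphi_*)_q=(\psi_*)_q$ already forces $\varphi(q)=\psi(q)$ is a small but welcome clarification of a point the paper leaves implicit.
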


\begin{proof}
  Let $A:=\{q\in M \mid (\varphi_*)_q=(\psi_*)_q\}$. Then $A$ is nonempty by assumption and also closed by the continuity of $\varphi_*$ and $\psi_*$. We show that $A$ is open, whence $A=M$, and we are done.

  Let $q\in A$ be fixed and let $\ir U_q$ be a normal neighbourhood of $q$. If $r$ is another point in $\ir U_q$ then there exists a tangent vector $v\in T_qM$ such that $r=\exp_q(v)=\gamma_v(1)$, where $\gamma_v$ denotes the unique maximal geodesic of $(M,F)$ with initial velocity $v$. Since a local isometry $\varphi\colon M\to M$ sends geodesics into geodesics (see Proposition~\ref{isoaff} and Remark~\ref{lociso}), we have that $\varphi\circ\gamma_v$ is a geodesic as well with initial velocity $\varphi_*(\dot{\gamma}_v(0))=\varphi_*(v)\in T_{\varphi(q)}M$. Naturally, we denote this unique geodesic by $\gamma_{\varphi_*(v)}$.

  The same assertions hold for the local isometry $\psi$, that is, $\gamma_{\psi_*(v)}:=\psi\circ\gamma_v$ is a geodesic of $(M,F)$. Thus we obtain
  \[
  \varphi(r)=\varphi(\gamma_v(1))=\gamma_{\varphi_*(v)}(1)\overset{q\in A}{=}\gamma_{\psi_*(v)}(1)=\psi(\gamma_v(1))=\psi(r)
  \]
  for all $r\in \ir U_q$. Therefore, $\varphi_*=\psi_*$ over $T\ir U_q$, and  $\ir U_q\subset A$. So it follows that a normal neighbourhood of an arbitrary point in $A$ is a subset of $A$, whence the openness of $A$.
\end{proof}

\section{Finslerian distance coordinates}
In this section first we introduce an intrinsic distance function on a Finsler manifold and collect its most important properties (for details we refer to \cite{BCS00:intr}). Next we show how to construct a so-called distance coordinate system around a point of a Finsler manifold.
\bigskip

Let $(M,F)$ be a Finsler manifold. The \emph{arc length} of a (piecewise differentiable) curve $\gamma\colon[a,b]\to M$ is
\[
L(\gamma):=\int_a^bF\circ\dot\gamma.
\]
Constant speed curves that are stationary points of this functional are exactly the geodesics of $F$ (see, e.g., \cite{SziZ10:diss}).

In a Finsler manifold $(M,F)$ the distance of two points $p,q\in M$ can be measured as follows. Let $\Gamma(p,q)$ denote the set of piecewise smooth curves $\gamma\colon[0,1]\to M$ such that $\gamma(0)=p$, $\gamma(1)=q$. Then the \emph{Finslerian distance} from $p$ to $q$ is
\[
\varrho(p,q):=\inf\{L(\gamma)\in\RR\mid\gamma\in\Gamma(p,q)\},
\]
and $\varrho\colon M\times M\to\RR$ is a quasi-metric on $M$. In this case the topology generated by the forward (or backward) metric balls is just the underlying manifold topology. Furthermore, if $F$ is reversible then $\varrho$ is symmetric, and hence it becomes a metric on $M$. A mapping between Finsler manifolds is called distance preserving (or a submetry) if it is distance preserving (or a submetry) with respect to the Finslerian distances. For a fixed point $p$ in $M$, the function $r_p:=\varrho(p,\cdot)$ is called the \emph{distance function} from $p$.

Let $(M,F)$ and $(N,\bar F)$ be two Finsler manifolds, and $\varphi\colon M\to N$ a Finslerian isometry. Then $\varphi$ is distance preserving as well, that is, for all $p,q$ in $M$ we have $\varrho_F(p,q)=\varrho_{\bar F}(\varphi(p),\varphi(q))$. Indeed, the Finslerian distances $\varrho_F$ and $\varrho_{\bar F}$ are defined with the help of the arc length of piecewise smooth curves. However, the length of a curve does not change under a Finslerian isometry, since
\[
 F\circ\dot{\overbracket[.4pt]{\varphi\circ\gamma}}=F\circ\varphi_*\circ\dot\gamma=F\circ\dot\gamma.
\]
Thus a Finslerian isometry is a surjective distance preserving map. The converse is also true, but much less trivial. Then the result is the Finslerian version of the well-known Myers--Steenrod theorem, see later Theorem \ref{thm:MyersSt}.

Let $\ir U$ be a normal neighbourhood of $p\in M$ and $\ir V\subset T_pM$ an open subset containing $0_p$ such that $\exp_p\upharpoonright\ir V$ is a diffeomorphism onto $\ir U$. Then for each $q\in\ir U$ we have
\begin{equation}\label{eq:expF}
r_p(q):=\varrho(p,q)=F((\exp_p\upharpoonright\ir V)\inv(q)).
\end{equation}
For a proof, see \cite[Theorem 6.3.1.]{BCS00:intr}.
From this we see that $r_p$ is smooth on a normal neighbourhood of $p$, except at the point $p$. Taking into account \eqref{eq:rescale}, it follows from \eqref{eq:expF} that for any $v\in T_pM$ and positive number $t$ such that $tv\in\ir V$ we have
\begin{equation}\label{eq:geoddist}
\varrho(p,\exp_p(tv))=\varrho(p,\gamma_v(t))=tF(v).
\end{equation}

Relation \eqref{eq:geoddist} makes it possible to reconstruct the Finsler function $F$ from the quasi-metric $\varrho$. The exact (and more general) formula is given by the next result.

\begin{lmm}[Busemann--Mayer theorem \cite{BusMay48}]\label{lmm:Fdist}
  Let $(M,F)$ be a Finsler manifold. Given a vector $v\in TM$ and a smooth curve $\alpha$ in $M$ such that $\dot\alpha(0)=v$, we have
\begin{equation*}
F(v)=\lim_{t\to0^+}\frac1t \varrho(\alpha(0),\alpha(t)).
\end{equation*}
\end{lmm}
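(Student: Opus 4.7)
The plan is to pull the distance function back through the exponential map at $p:=\alpha(0)$ and reduce the claim to the continuity and $1$-homogeneity of $F$ on $T_pM$. I would first choose a normal neighbourhood $\ir U$ of $p$ together with an open star-shaped $\ir V\subset T_pM$ containing $0_p$ such that $\exp_p\upharpoonright\ir V$ is a $C^1$ diffeomorphism onto $\ir U$, as guaranteed by Lemma~\ref{thm:Wh1}. By continuity of $\alpha$, for all sufficiently small $t>0$ we have $\alpha(t)\in \ir U$, so one can define
\[
\beta(t):=(\exp_p\upharpoonright\ir V)\inv(\alpha(t))\in T_pM,
\]
and formula \eqref{eq:expF} rewrites the difference quotient as $\frac{1}{t}\varrho(p,\alpha(t))=\frac{1}{t}F(\beta(t))$.

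The next step is to compute $\dot\beta(0)$. Since $(\exp_p\upharpoonright\ir V)\inv$ is $C^1$ near $p$ by Lemma~\ref{thm:Wh1}(i), the curve $\beta$ is differentiable at $0$; and since Lemma~\ref{thm:Wh1}(ii) says $((\exp_p)_*)_{0_p}$ is the canonical identification of $T_{0_p}T_pM$ with $T_pM$, the chain rule together with $\beta(0)=0_p$ and $\dot\alpha(0)=v$ forces $\dot\beta(0)=v$ under this identification. Consequently $\beta(t)/t\to v$ in $T_pM$ as $t\to0^+$. I would then invoke positive $1$-homogeneity of $F$ to rewrite
\[
\frac{1}{t}F(\beta(t))=F\!\left(\frac{\beta(t)}{t}\right),
\]
and conclude by continuity of $F$ on $TM$ (part of the definition of a Finsler function) that this expression tends to $F(v)$. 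The degenerate case $v=0_p$ requires no separate treatment, since $\beta(t)/t\to 0_p$ and $F(0_p)=0=F(v)$ by $1$-homogeneity.

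The only technical point that deserves attention is that $\exp_p$ is known to be merely $C^1$ at $0_p$, not smooth, so one must rely on Lemma~\ref{thm:Wh1} rather than the smooth inverse function theorem. But $C^1$ is exactly the regularity needed to guarantee that $\beta$ is differentiable at $0$, which is all the argument uses; beyond this, the proof is entirely straightforward, because \eqref{eq:expF} has already done the heavy lifting of linking the Finslerian distance to the Finsler norm via the exponential map.
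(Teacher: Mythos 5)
Your argument is correct. Note that the paper itself offers no proof of this lemma --- it is quoted from Busemann--Mayer \cite{BusMay48} --- so there is nothing to compare against line by line; what you give is a clean self-contained derivation from ingredients the paper already has on the table, namely \eqref{eq:expF} (itself quoted from \cite{BCS00:intr}) and the $C^1$ regularity of $\exp_p$ at the origin from Lemma~\ref{thm:Wh1}. The key steps all check out: $\beta=(\exp_p\upharpoonright\ir V)\inv\circ\alpha$ is differentiable at $0$ with $\beta(0)=0_p$, so $\beta(t)/t\to\dot\beta(0)=v$ under the canonical identification, and positive $1$-homogeneity plus continuity of $F$ on all of $TM$ finish the job; the $v=0_p$ case is correctly absorbed. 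The only caveat worth flagging is that \eqref{eq:expF} as stated in the paper (for an arbitrary normal neighbourhood $\ir U$) is really valid only when the radial geodesics from $p$ are minimizing within $\ir U$, e.g.\ when $\ir U$ is a sufficiently small forward metric ball; since you only need the identity for $\alpha(t)$ with $t$ small, you could make this airtight by shrinking $\ir U$ to such a ball before defining $\beta$. With that minor adjustment the proof is complete, and arguably more informative than the bare citation in the paper, though the original Busemann--Mayer theorem is stronger in that it applies to merely Lipschitz or rectifiable curves rather than smooth ones.
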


Now we turn to the construction of distance coordinates mentioned earlier.

\begin{prp}\label{prp:distcord}
Given a point $p$ in a Finsler manifold, there is a sequence $(p_i)\en$ of points such that $\theta:=(r_{p_1},\dots,r_{p_n})$ is a diffeomorphism from an open neighbourhood of $p$ onto an open subset of $\RR^n$.
\end{prp}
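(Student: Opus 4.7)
My plan is to verify the hypotheses of the inverse function theorem. By \eqref{eq:expF}, if $p_i\neq p$ and $p$ lies in a normal neighbourhood of $p_i$, then $r_{p_i}=F\circ\exp_{p_i}^{-1}$ is smooth on a neighbourhood of $p$, so $\theta$ will automatically be smooth near $p$. It therefore suffices to arrange that the $n$ covectors $(dr_{p_1})_p,\dots,(dr_{p_n})_p$ form a basis of $T_p^*M$.

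The first step is to identify, for each nonzero $v\in T_pM$, the target covector in $T_p^*M$ that I intend to approximate. Let $F_p:=F\upharpoonright T_pM$ and set $\lambda_v:=(dF_p)_v\in T_p^*M$, using the canonical identification $T_vT_pM\cong T_pM$. I claim that $\{\lambda_v:v\in\slt_pM\}$ spans $T_p^*M$. Indeed, if some $w\in T_pM$ annihilates every $\lambda_v$, then $s\mapsto F_p(v+sw)$ is constant in $s$ for every nonzero $v$, and the $1$-homogeneity of $F_p$ then yields
\[
F_p(w)=\lim_{s\to+\infty}\frac{F_p(v+sw)}{s}=\lim_{s\to+\infty}F_p\!\left(\frac{v}{s}+w\right)=0,
\]
forcing $w=0$. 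Hence I may pick $v_1,\dots,v_n\in\slt_pM$ so that $\lambda_{v_1},\dots,\lambda_{v_n}$ is a basis of $T_p^*M$.

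The second step is to realise these covectors as limits of $(dr_{p_i})_p$ for suitable $p_i$. By Lemma \ref{lmm:assocp} and the ensuing remark I may take $p_i=\gamma_{v_i}(-\delta_i)$ with $\delta_i>0$ arbitrarily small, so that (i) and (ii) of that lemma hold. Setting $u_i:=\exp_{p_i}^{-1}(p)=\delta_i\dot\gamma_{v_i}(-\delta_i)$ and applying the chain rule to \eqref{eq:expF}, the covector $(dr_{p_i})_p$ factors as the composition of $\bigl((\exp_{p_i})_*\bigr)^{-1}$ at $u_i$ with $(dF)_{u_i}$. Now $(dF)_{u_i}$ is $0$-homogeneous in the fibre variable, so it equals $(dF)_{\dot\gamma_{v_i}(-\delta_i)}$, which tends to $(dF_p)_{v_i}=\lambda_{v_i}$ as $\delta_i\to 0^+$; simultaneously Lemma \ref{thm:Wh1}(ii) together with continuity of $\exp$ forces the derivative of $\exp_{p_i}^{-1}$ at $p$ to approach the identity, all read inside a fixed chart around $p$. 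Therefore $(dr_{p_i})_p\to\lambda_{v_i}$ in $T_p^*M$ as $\delta_i\to 0^+$.

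Since being a basis is an open condition on $n$-tuples of covectors, I can shrink each $\delta_i$ enough that $(dr_{p_i})_p$ still forms a basis of $T_p^*M$; the inverse function theorem then upgrades $\theta$ to a diffeomorphism between a neighbourhood of $p$ and an open subset of $\RR^n$. The main delicacy is the limit computation in the preceding paragraph: the distinct tangent spaces $T_{p_i}M$ with $p_i\to p$ must be compared consistently, which is accomplished by working in a single coordinate chart around $p$ and exploiting the $0$-homogeneity of $dF$ in the fibre variable together with the identification supplied by Lemma \ref{thm:Wh1}(ii).
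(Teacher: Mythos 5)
Your proof is correct in outline but takes a genuinely different route from the paper's, and one step is under-justified as written. The paper never takes a limit: it chooses the directions $v_1,\dots,v_n$ inductively so that $v_k$ is tangent at $p$ to the distance spheres $S_1,\dots,S_{k-1}$ already constructed, whence $v_i(r_{p_j})=0$ for $j<i$ by construction, while the diagonal entries $v_i(r_{p_i})$ are computed exactly from \eqref{eq:expF} along the radial geodesic and equal $\lambda_iF(w_i)\neq0$. The triangular Jacobian then gives both that $(v_i)\en$ is a basis and that $(\theta_*)_p$ is invertible, with the emanating points fixed once and for all; only the fibrewise regularity of $\exp_{p_i}$ in Lemma \ref{thm:Wh1} is ever used. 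You instead identify the limiting covectors $\lambda_{v_i}=(dF_p)_{v_i}$, show they may be chosen to form a basis (your spanning argument works, and can be shortened: Euler's relation gives $\lambda_w(w)=F_p(w)>0$ for $w\neq0$, so no nonzero $w$ annihilates all the $\lambda_v$), and conclude by openness of the basis condition. What your route buys is a conceptual identification of $(dr_{p_i})_p$ with the fibre derivative of the Finsler norm; what the paper's buys is that every quantity is exact, so no perturbation or regularity of $\exp$ transverse to the fibres is needed.

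The step that needs repair is the assertion that $\bigl((\exp_{p_i}^{-1})_*\bigr)_p\to\operatorname{id}$ as $\delta_i\to0^+$. You appeal to ``Lemma \ref{thm:Wh1}(ii) together with continuity of $\exp$'', but continuity of $\exp$ gives no control on derivatives, and Lemma \ref{thm:Wh1} is a statement about each fibre map $\exp_{p'}$ separately: it does not assert that $(p',u)\mapsto\bigl((\exp_{p'})_*\bigr)_u$ is jointly continuous at the zero section, which is exactly what your limit requires, since the base points $p_i$ move with $\delta_i$. What you need is the stronger Whitehead-type statement that $\exp$ is of class $C^1$ on a neighbourhood of the zero section in $\hll{TM}$ jointly in both variables, with fibre derivative along the zero section equal to the canonical isomorphism; this is true (it underlies the existence of totally normal neighbourhoods, cf.\ \cite{Whit33:totnor}, \cite{Shen01:Spra}), but it must be invoked explicitly rather than derived from the lemma as stated. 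Alternatively, you could avoid the limit entirely: the Finslerian Gauss lemma gives the exact identity $(dr_{p_i})_p=(dF_p)_{v_i}=\lambda_{v_i}$ for any emanating point $p_i$ of $v_i$, after which your first step already finishes the proof.
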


\begin{proof}
Choose a nonzero vector $v_1$ in $T_pM$, and let $p_1$ be an emanating point of $v_1$ (see the very end of Section 2). Then $p\in S_\varrho(p_1,\varrho_1)=:S_1$ for some $\varrho_1>0$. Now choose a nonzero vector $v_2$ in $T_p S_1$ and introduce $p_2$ and $S_2$ analogously. Continue this construction with vectors $v_k\in \cap_{i=1}^{k-1}T_pS_i$ for $k\in\{3,\dots,n\}$. The subspace $\cap_{i=1}^{k-1}T_pS_i$ is nontrivial, since the intersection of $k$ subspaces of dimension $n-1$ has dimension of at least $n-k$ by Sylvester's rank inequality. So we can choose a nonzero vector $v_k$ in every step of this construction.

Now consider the mapping $\theta:=(r_{p_1},\dots,r_{p_n})$ where $r_{p_i}$ is the distance function from $p_i$ for $i\in\{1,\dots,n\}$. Then, by Lemma \ref{lmm:assocp}(i), $\theta$ is smooth on a neighbourhood of $p$. We show that $(\theta_*)_p$ is bijective, hence $\theta$ is a diffeomorphism from an open subset $\ir D$ containing $p$ onto an open subset of $\RR^n$.

Let $(e^j)_{j=1}^n$ be the canonical coordinate system of $\RR^n$. Then, for any fixed $i,j\in\{1,\dots,n\}$, we have $(\theta_*)_p(v_i)(e^j)=v_i(e^j\circ\theta)=v_i(r_{p_j})$. Since $p_i$ is an emanating point of $v_i$, we have $v_i=\lambda_i\dot\gamma_{w_i}(\varrho_i)$ for some $w_i\in T_{p_i}M$ and $\lambda_i>0$. Then
\begin{multline*}
\frac1{\lambda_i}v_i(r_{p_i})=\dot\gamma_{w_i}(\varrho_i)(r_{p_i})=\left(\frac{d}{dt}\right)_{\hspace{-3pt}\varrho_i}(r_{p_i}\circ\gamma_{w_i}) \\=\left(\frac{d}{dt}\right)_{\hspace{-3pt}\varrho_i}(s \mapsto r_{p_i}\circ\exp_{p_i}(s w_i))
\overset{\eqref{eq:expF}}{=}\left(\frac{d}{dt}\right)_{\hspace{-3pt}\varrho_i}(s\mapsto s F(w_i))=F(w_i)\neq0.
\end{multline*}
Furthermore, $v_i(r_{p_j})=0$ if $j<i$, because $v_i\in T_pS_j$. Thus the Jacobian matrix $\big((\theta_*)_p(v_i)(e^j)\big)$ is strictly lower triangular, so $(v_i)\en$ must be a basis of $T_pM$, and hence $(\theta_*)_p$ is bijective.
\end{proof}

The pair $(\ir D,\theta)$ constructed in the proof above is called a \emph{distance coordinate system} at $p$ and the points $(p_i)\en$ are mentioned as its \emph{base points}.

\begin{rmrk}\label{rmrk:closedistcord}
Since emanating points of vectors in $T_pM$ can be chosen arbitrarily close to $p$, for any neighbourhood $\ir U$ of $p$ we can choose a distance coordinate system with base points contained in $\ir U$.
\end{rmrk}

\section{The Finslerian Myers--Steenrod theorem}

In this section we present a simple proof of the Finslerian version of the Myers--Steenrod theorem, using distance coordinates. This result was first obtained by Brickell (see \cite{Bric65:diff}) and was rediscovered by S.~Deng and Z.~Hou \cite{DeHo02:isom}. The idea of our proof is the same as that of Petersen \cite{Pet00:Riem} in the case of Riemannian manifolds.

\begin{thm}\label{thm:MyersSt}
  A surjective distance-preserving map between Finsler manifolds is a Finslerian isometry.
\end{thm}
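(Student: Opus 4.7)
The plan is to show that the given surjective distance-preserving map $\varphi\colon(M,F)\to(N,\bar F)$ is a diffeomorphism; once this is established, the isometry condition $\bar F\circ\varphi_*=F$ drops out immediately from the Busemann--Mayer theorem (Lemma~\ref{lmm:Fdist}), applied to any smooth curve $\alpha$ with $\dot\alpha(0)=v$ and to its image $\varphi\circ\alpha$, whose initial velocity is $\varphi_*(v)$.

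First I would dispose of the topological side. A distance-preserving map is automatically injective, and its continuity with respect to the underlying manifold topologies is built into the definition of a quasi-metric. Since $\varphi\inv$ is itself surjective and distance-preserving, the same observation applied to it makes $\varphi$ a homeomorphism; in particular $\dim M=\dim N=:n$ by invariance of domain.

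The heart of the matter is smoothness of $\varphi$ at an arbitrary $p\in M$. Set $\bar p:=\varphi(p)$ and fix a totally normal neighbourhood $\ir U$ of $p$ in $M$, so that $\varphi(\ir U)$ is an open neighbourhood of $\bar p$ in $N$. By Proposition~\ref{prp:distcord} together with Remark~\ref{rmrk:closedistcord}, I choose a distance coordinate system $(\bar{\ir D},\bar\theta)$ at $\bar p$ whose base points $\bar p_1,\dots,\bar p_n$ all lie inside $\varphi(\ir U)$, and set $p_i:=\varphi\inv(\bar p_i)\in\ir U$. Since $\ir U$ is a normal neighbourhood of each of its points and $p\neq p_i$, formula \eqref{eq:expF} guarantees that each $r_{p_i}$ is smooth at $p$, hence so is $\theta:=(r_{p_1},\dots,r_{p_n})$. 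Distance preservation yields $r_{p_i}=r_{\bar p_i}\circ\varphi$, so $\theta=\bar\theta\circ\varphi$ on a neighbourhood of $p$; shrinking this neighbourhood by continuity of $\varphi$ so that its image stays inside $\bar{\ir D}$, I compose on the left with the smooth local inverse of $\bar\theta$ and obtain $\varphi=\bar\theta\inv\circ\theta$ near $p$. This exhibits $\varphi$ as smooth at $p$; the same argument applied to $\varphi\inv$ upgrades $\varphi$ to a diffeomorphism, and Busemann--Mayer finishes the job.

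The delicate point, and where distance coordinates really pay off, is the smoothness of $\theta$ at $p$: if the $p_i$ were only vaguely known to lie ``near'' $p$ I could not evaluate $r_{p_i}$ through the explicit exponential formula \eqref{eq:expF}. The trick of fixing the base points on the $N$-side first, and only then pulling them back into a prescribed totally normal neighbourhood of $p$, is precisely what removes this obstacle. As a bonus, this scheme sidesteps any need to transfer the emanating-point construction of Lemma~\ref{lmm:assocp} across $\varphi$, which would be awkward before smoothness of $\varphi$ is known.
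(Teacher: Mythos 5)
Your proposal is correct and follows essentially the same route as the paper: distance coordinates at $\varphi(p)$ with base points pulled back into a totally normal neighbourhood of $p$, the identity $r_{p_i}=\bar r_{\bar p_i}\circ\varphi$ to write $\varphi$ locally as $\bar\theta\inv\circ(r_{p_1},\dots,r_{p_n})$, and the Busemann--Mayer theorem for $\bar F\circ\varphi_*=F$. The only cosmetic difference is that you ensure the base points land in the right place by first noting $\varphi$ is a homeomorphism, whereas the paper uses the forward-ball identity $\varphi(B_\varrho(p,r))=B_{\bar\varrho}(\varphi(p),r)$; both are fine.
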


\begin{proof}
  Let $(M,F)$ and $(N,\bar F)$ be Finsler manifolds with Finslerian distances $\varrho$ and $\bar \varrho$, respectively, and let $\varphi\colon M\to N$ be a surjective distance-preserving mapping. Then, obviously, $\varphi$ is also injective.

  First we prove that $\varphi$ is smooth, and hence it is a diffeomorphism. Fix a point $p\in M$ and let $q:=\varphi(p)$. Forward balls generate the topology of $M$, therefore for a sufficiently small $r>0$, we can assume that $B_\varrho(p,r)$ is contained in a totally normal neighbourhood of $p$. Since $\varphi$ is surjective and preserves distance, we have $B_{\bar \varrho}(q,r)=\varphi(B_\varrho(p,r))$.

  By Remark \ref{rmrk:closedistcord}, there is a distance coordinate system $(\ir D,\theta)$ at $q$ with base points $(q_i)\en$ contained in $B_{\bar \varrho}(q,r)$. Setting $p_i:=\varphi\inv(q_i)$ for $i\in\{1,\dots,n\}$, we have $p_i\in B_\varrho(p,r)$, and for all $a\in\varphi\inv(\ir D)\subset M$ relation
  \[
    r_{p_i}(a)=\varrho(p_i,a)=\bar \varrho(q_i,\varphi(a))=\bar r_{q_i}(\varphi(a))
  \]
  holds, where $r_{p_i}$ and $\bar r_{q_i}$ denote distance functions in $M$ and $N$, respectively. Therefore, $\theta\circ\varphi=(\bar r_{q_1}\circ\varphi,\dots,\bar r_{q_n}\circ\varphi)=(r_{p_1},\dots,r_{p_n})$. Since $B_\varrho(p,r)$ is contained in a totally normal neighbourhood of $p$, the functions $r_{p_i}$ are smooth on $B_\varrho(p,r)\setminus\{p_i\}$ for all $i\in\{1,\dots,n\}$. Furthermore, $\theta$ is a diffeomorphism on a neighbourhood of $q=\varphi(p)$, therefore $\varphi$ is smooth at $p$.

  Secondly, we show that $\bar F\circ \varphi_*=F$. For this purpose, let $v\in TM$, and let $\alpha$ be a smooth curve in $M$ such that $\dot\alpha(0)=v$. Then $\varphi\circ\alpha$ is a smooth curve in $N$ with the property $\dot{\overbracket[.4pt]{\varphi\circ\alpha}}(0)=\varphi_*(v)$. So, applying Lemma \ref{lmm:Fdist}, we obtain
  \[
    F(v)=\lim_{t\to 0^+}\frac 1 t \varrho(\alpha(0),\alpha(t))=\lim_{t\to 0^+}\frac 1 t \bar \varrho\big(\varphi(\alpha(0)),\varphi(\alpha(t))\big)=\bar F(\varphi_*(v)).
  \]
\end{proof}

\section{Finslerian submetries}

To prepare the main result of this concluding section, we start with the following observation.

\begin{lmm}\label{lmm:distsub}
Let $(M,F)$ be a reversible Finsler manifold, $p$ a point in $M$, and\/ $\ir U$ a normal neighbourhood of $p$. Then the distance function $r_p$ restricted to $\ir U\setminus\{p\}$ is a regular submetry into $\RR$.
\end{lmm}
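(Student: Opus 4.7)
The plan is to exploit the exponential-map representation of the distance function, $r_p\circ(\exp_p\upharpoonright\ir V)=F\upharpoonright\ir V$ from \eqref{eq:expF}, where $\ir V\subset T_pM$ is a star-shaped open set on which $\exp_p$ is a diffeomorphism onto $\ir U$. For each $q\in\ir U\setminus\{p\}$ I would set $v_q:=(\exp_p\upharpoonright\ir V)\inv(q)$ and $u_q:=v_q/F(v_q)$, so that $r_p(q)=F(v_q)$ and the curve $t\mapsto\exp_p(tu_q)$ is a unit-$F$-speed geodesic passing through $p$ at $t=0$ and through $q$ at $t=r_p(q)$. Introduce
\[
T_q:=\sup\{t>0\mid tu_q\in\ir V\},\qquad \delta_q:=\min\bigl(r_p(q),\,T_q-r_p(q)\bigr);
\]
the star-shape of $\ir V$ about $0_p$ together with openness at $v_q$ forces $T_q>r_p(q)$, hence $\delta_q>0$.

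For each $\varepsilon\in(0,\delta_q)$ I then verify the submetry identity $r_p(B_\varrho(q,\varepsilon))=(r_p(q)-\varepsilon,r_p(q)+\varepsilon)$. The inclusion $\subseteq$ follows from the two triangle inequalities for $\varrho$ combined with the symmetry $\varrho(q,q')=\varrho(q',q)$ furnished by reversibility. For $\supseteq$, given $s$ in the interval, the bounds $0<s<T_q$ together with the star-shape of $\ir V$ place $su_q$ in $\ir V$, so that $q':=\exp_p(su_q)$ satisfies $r_p(q')=s$ by \eqref{eq:expF}. The radial geodesic restricted to parameters between $r_p(q)$ and $s$ is then a curve of $F$-length $|s-r_p(q)|$ between $q$ and $q'$, and reversibility guarantees $\varrho(q,q')\leq|s-r_p(q)|<\varepsilon$ irrespective of whether $s$ exceeds $r_p(q)$ or not.

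For regularity, let $K\subset\ir U\setminus\{p\}$ be compact and set $V_K:=(\exp_p\upharpoonright\ir V)\inv(K)$, a compact subset of $\ir V\setminus\{0_p\}$. The continuous positive function $F$ attains a positive minimum $m$ on $V_K$, so $r_p\geq m$ on $K$. I would then fix an auxiliary Euclidean norm $\|\cdot\|$ on $T_pM$; since $V_K$ is compact and $T_pM\setminus\ir V$ is closed and disjoint from it, their Euclidean distance $\rho$ is strictly positive, while $\|u_q\|=\|v_q\|/F(v_q)$ is uniformly bounded above on $V_K$ by some $M_1$. For $t\in[0,\rho/M_1)$ the point $(r_p(q)+t)u_q=v_q+tu_q$ has Euclidean distance $t\|u_q\|<\rho$ from $v_q\in V_K$, hence lies in $\ir V$; this yields $T_q-r_p(q)\geq\rho/M_1$, and therefore $\delta_K\geq\min(m,\rho/M_1)>0$.

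The chief obstacle I anticipate is precisely this regularity step: although $T_q-r_p(q)>0$ for every individual $q$ by the openness of $\ir V$, extracting a single uniform bound over all of $K$ requires converting the purely topological statement ``$V_K$ compact inside the open set $\ir V$'' into a quantitative estimate along the specific rays $t\mapsto tu_q$. The detour through an auxiliary Euclidean norm, exploiting compactness of $V_K$ to control $\|u_q\|$ from above and the distance to the complement of $\ir V$ from below, is what makes this conversion possible.
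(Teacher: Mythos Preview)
Your argument is correct and follows the same underlying idea as the paper: the containment $r_p(B_\varrho(q,\varepsilon))\subset(r_p(q)-\varepsilon,r_p(q)+\varepsilon)$ comes from the triangle inequality together with symmetry of $\varrho$, and the reverse inclusion is obtained by sliding along the radial unit-speed geodesic from $p$ through $q$, with reversibility handling the case $s<r_p(q)$.

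The only substantive difference is your choice of $\delta_q$. The paper takes $\delta_q=\min\bigl(\varrho(p,q),\,\varrho(q,M\setminus\ir U)\bigr)$, a purely metric quantity in $M$; with this choice the regularity step becomes a one-liner, since for compact $K\subset\ir U\setminus\{p\}$ both $\varrho(K,\{p\})$ and $\varrho(K,M\setminus\ir U)$ are positive by the standard fact that disjoint closed and compact sets have positive distance. Your $\delta_q=\min\bigl(r_p(q),\,T_q-r_p(q)\bigr)$, defined via rays in $T_pM$, is equally valid but forces you into the longer detour through an auxiliary Euclidean norm to extract a uniform lower bound on $T_q-r_p(q)$. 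Your version has the minor advantage that the geodesic $t\mapsto\exp_p(tu_q)$ is manifestly defined and inside $\ir U$ for $t\in(0,T_q)$, whereas the paper's argument leaves this slightly implicit; the paper's version buys a much shorter regularity proof.
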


\begin{proof}
Choose a point $q$ in $\ir U\setminus\{p\}$. Let $\delta$ be the minimum of the two numbers $\varrho(p,q)$ and $\varrho(q,M\setminus\ir U):=\inf\{\varrho(q,\hll{q})\mid\hll{q}\in M\setminus\ir U\}$. Fix $\varepsilon\in\left]0,\delta\right[$ and $a\in B_\varrho(q,\varepsilon)$.
The Finslerian distance satisfies the triangle inequality, therefore
\begin{align*}
\varrho(p,a)&\leq \varrho(p,q)+\varrho(q,a),\\
\varrho(p,q)&\leq \varrho(p,a)+\varrho(a,q).
\end{align*}
Rearranging these inequalities, and using that $F$ is reversible (whence $\varrho$ is symmetric), we obtain $|\varrho(p,a)-\varrho(p,q)|\leq \varrho(q,a)<\varepsilon$. Consequently,
\[
r_p(B_\varrho(q,\varepsilon))\subset \left]r_p(q)-\varepsilon,r_p(q)+\varepsilon\right[=:B(r_p(q),\varepsilon).
\]

Now we show that $r_p$ maps $B_\varrho(q,\varepsilon)$ onto $B(r_p(q),\varepsilon)$. Let $\gamma$ be the maximal unit speed geodesic starting from $p$ passing through $q$. For any positive $t$ such that $\gamma([0,t])$ is contained in $\ir U$, we have
\begin{equation}\label{eq:rpgeod}
r_p(\gamma(t))=\varrho(p,\gamma(t))\overset{\eqref{eq:geoddist}}{=}t.
\end{equation}
Since $\varepsilon<r_p(q)$, the interval $B(r_p(q),\varepsilon)$ contains only positive numbers. Furthermore, $B_\varrho(q,\varepsilon)\subset\ir U$ because $\varepsilon<\varrho(q,M\setminus \ir U)$. Thus, according to \eqref{eq:rpgeod}, we only have to show that $\gamma(t)\in B_\varrho(q,\varepsilon)$ if $t\in B(r_p(q),\varepsilon)$.

First assume that $t\in\left[r_p(q),r_p(q)+\varepsilon\right[$. The curve $\gamma$ is of unit speed, so for any $t\in\left[r_p(q),r_p(q)+\varepsilon\right[$, the length of the curve segment $\gamma\upharpoonright[r_p(q),t]$ is equal to $t-r_p(q)$. Notice that \eqref{eq:rpgeod} implies $\gamma(r_p(q))=q$. Then, by the definition of the Finslerian distance, we have
\[
\varrho(q,\gamma(t))=\varrho(\gamma(r_p(q)),\gamma(t))\leq t-r_p(q)<\varepsilon.
\]
Secondly, suppose that $t\in\left]r_p(q)-\varepsilon,r_p(q)\right]$. Using the reversibility of $F$, we obtain similarly that $\varrho(q,\gamma(t))\leq r_p(q)-t<\varepsilon$, as was to be shown.

The regularity of $r_p$ follows from the fact that disjoint closed and compact sets have positive distance.
\end{proof}

Now we are in a position to present a second application of distance coordinates. We shall use some ideas of \cite{Ber00:subm} where the analogous result is proved in Riemannian setting.

\begin{thm}\label{thm:Beres}
A surjective regular submetry between reversible Finsler manifolds is differentiable.
\end{thm}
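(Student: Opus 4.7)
The strategy is to adapt Berestovskii's Riemannian argument to the Finslerian setting using the distance coordinates constructed in Section~4. Fix $p \in M$ and set $q := \varphi(p)$; it suffices to exhibit a neighborhood of $p$ on which $\varphi$ is smooth. Using the regularity of $\varphi$ at the compact singleton $\{p\}$, I would first choose $r > 0$ so small that $\varphi(B_\varrho(p, s)) = B_{\bar\varrho}(q, s)$ for every $s < 2r$, and so that $B_{\bar\varrho}(q, 2r)$ and $B_\varrho(p, 2r)$ are contained in totally normal neighborhoods of $q$ and $p$ respectively. By Proposition~\ref{prp:distcord} and Remark~\ref{rmrk:closedistcord}, fix a distance coordinate system $(\ir D, \theta = (\bar r_{q_1}, \ldots, \bar r_{q_n}))$ at $q$ with base points $q_i \in B_{\bar\varrho}(q, r)$; then $\theta$ is a diffeomorphism from $\ir D$ onto an open subset of $\RR^n$.

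Since $\varphi = \theta^{-1} \circ (\theta \circ \varphi)$ on $\varphi^{-1}(\ir D)$, the task reduces to showing each $f_i := \bar r_{q_i} \circ \varphi$ is smooth near $p$. Using the regular submetry property together with reversibility, one first verifies the identity $f_i(a) = \varrho(a, \varphi^{-1}(q_i))$ for $a$ near $p$: the inequality ``$\leq$'' follows by lifting nearly-shortest paths via $\varphi(B_\varrho(a, s)) = B_{\bar\varrho}(\varphi(a), s)$ and using the symmetry of $\varrho$, while ``$\geq$'' is the $1$-Lipschitz property of submetries. Next, lift each $q_i$ to some $p_i \in B_\varrho(p, r)$ with $\varphi(p_i) = q_i$ and $\varrho(p_i, p) = \bar\varrho(q_i, q)$, and let $\sigma_i$ be a unit-speed minimizing geodesic from $p_i$ to $p$; combining $1$-Lipschitzness with the triangle inequality shows that $\varphi \circ \sigma_i$ is itself a unit-speed minimizing geodesic from $q_i$ to $q$ in $N$.

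The main obstacle is to prove that the distance-to-fiber function $a \mapsto \varrho(a, \varphi^{-1}(q_i))$ is smooth near $p$. Following Berestovskii, I would transfer the fiber near $p_i$ into $T_{p_i}M$ via $\exp_{p_i}^{-1}$, use $\dot\sigma_i(0)$ as a distinguished ``horizontal'' direction, and argue that near $p_i$ the fiber is the exponential image of a ``vertical'' complement consisting of tangent vectors that are infinitesimally shrunk to zero Finslerian length by the submetry. The distance to the fiber should then equal the Finslerian norm of the horizontal component of $\exp_{p_i}^{-1}(a)$, a smooth function of $a$ by Lemma~\ref{thm:Wh1}. The delicate points are the absence of a canonical orthogonal projection (the horizontal/vertical splitting must be set up using the Finsler function rather than an inner product) and the verification that this splitting is actually realised by the fiber; reversibility of $F$ enters as it does in Lemma~\ref{lmm:distsub}. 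Once the smoothness of every $f_i$ is established, composing with the smooth inverse $\theta^{-1}$ yields the smoothness of $\varphi$ near $p$.
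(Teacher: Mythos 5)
Your reduction is the same as the paper's: compose with the distance coordinates $\bar r_{q_i}$ at $\varphi(p)$ and show that each real-valued function $f_i=\bar r_{q_i}\circ\varphi$ is differentiable near $p$. But the way you propose to finish contains a genuine gap. You want to identify $f_i(a)$ with the distance $\varrho(a,\varphi\inv(q_i))$ to the fiber and then prove that this distance-to-fiber function is \emph{smooth} near $p$ by exhibiting the fiber, near a lift $p_i$, as the exponential image of a ``vertical'' complement in $T_{p_i}M$. A priori the fiber $\varphi\inv(q_i)$ of a submetry is only a closed set; that it is locally a $C^1$ submanifold with a horizontal/vertical splitting adapted to $F$ is essentially equivalent to the regularity of $\varphi$ that you are trying to prove, and you explicitly leave its verification open (``the verification that this splitting is actually realised by the fiber''). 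Even in the Riemannian case of Berestovskii--Guijarro this is the hard content of the theorem, and it does not come for free from Lemma \ref{thm:Wh1}; moreover the theorem only claims differentiability, so aiming at smoothness of the distance to the fiber is both stronger than needed and unproven.

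The paper takes a different and much lighter route at exactly this point. It first observes (Lemma \ref{lmm:distsub}) that the coordinate functions $\bar r_{q_i}$ are themselves \emph{regular submetries} into $\RR$ on a normal neighbourhood, so each $f_i=\bar r_{q_i}\circ\varphi$ is a regular submetry into $\RR$ by the composition property from Section 1; the problem then becomes: a regular submetry $r\colon M\to\RR$ is differentiable. For this one picks, for small $\delta$, points $a\in r\inv(\{r(q)-\delta\})$ and $b\in r\inv(\{r(q)+\delta\})$ with $\varrho(q,a)=\varrho(q,b)=\delta$ (such points exist because $r$ is a submetry), and squeezes $r$ between the smooth functions $f_b(u)=r(b)-\varrho(b,u)$ and $f_a(u)=r(a)+\varrho(a,u)$, which agree with $r$ at $q$ and whose difference has a minimum there, forcing $(df_a)_q=(df_b)_q$ and hence differentiability of $r$ at $q$. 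This sandwich argument needs only the smoothness of distance functions from single points on punctured normal neighbourhoods (equation \eqref{eq:expF}) and never touches the geometry of the fibers of $\varphi$. If you want to salvage your approach, you should replace the distance-to-fiber smoothness claim by such a two-sided comparison with distance functions from well-chosen points.
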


\begin{proof}
Let $(M,F)$ and $(N,\bar F)$ be reversible Finsler manifolds, and let $\varphi$ be a surjective regular submetry from $M$ to $N$. Choose a point $p\in M$ and a distance coordinate system $(\ir D,(\theta^i)\en)$ at $\varphi(p)$ such that the base points and $\ir D$ are contained in a totally normal neighbourhood of $\varphi(p)$. The functions $\theta^i$ are distance functions on a reversible Finsler manifold, therefore, by the previous lemma, they are regular submetries on $\ir D$. So the functions $\theta^i\circ\varphi$ are also regular submetries on $\varphi\inv(\ir D)$. If these functions are differentiable, then $\theta\circ\varphi$ is also differentiable, therefore $\varphi$ itself is differentiable. Consequently, we only have to show that regular submetries from a reversible Finsler manifold $(M,F)$ into $\RR$ are differentiable.

To do this, let $r$ be such a submetry, and let $q$ be a point in $M$. Select an open neighbourhood $D$ of $q$ with compact closure and a number $\delta\in\left]0,\delta_{\oper{cl}(D)}\right[$ such that the closure of $B_\varrho(q,\delta)$ is contained in $D$. Then for any point $p\in \oper{cl}(B_\varrho(q,\delta))$ we have $\delta_p>\delta$.

Consider the fibers $H^+:=r\inv(\{r(q)+\delta\})$ and $H^-:=r\inv(\{r(q)-\delta\})$. Since $r$ is a submetry, there is a point $b\in H^+$ and a point $a\in H^-$ such that $\varrho(q,b)=\varrho(q,a)=\delta$. Then $\delta_a>\delta$ and $\delta_b>\delta$, so $B_\varrho(a,\delta_a)\cap B_\varrho(b,\delta_b)$ is an open neighbourhood of $q$. Define the functions $f_a$ and $f_b$ on this set by $f_a(u):=r(a)+r_{a}(u)$ and $f_b(u):=r(b)-r_{b}(u)$. These functions have the following properties:
\begin{enumerate}[(i)]
\itemsep-3pt
\item $f_b\leq r\leq f_a$;
\item $f_b(q)=r(q)=f_a(q)$.
\end{enumerate}
Indeed, since $r$ is submetry,
\begin{align*}
r_{a}(u)=\varrho(a,u)\geq|r(u)-r(a)|\geq r(u)-r(a).
\end{align*}
Similarly, $r_{b}(u)\geq r(b)-r(u)$, and we obtain (i). Furthermore,
\begin{align*}
f_a(q)&=r(a)+r_{a}(q)\overset{a\in H^-}=r(q)-\delta+\varrho(q,a)=r(q),\\
f_b(q)&=r(b)-r_{b}(q)\overset{b\in H^+}=r(q)+\delta-\varrho(q,b)=r(q),
\end{align*}
so (ii) is also true. The function $f_a-f_b$ is differentiable on $B_\varrho(a,\delta_a)\cap B_\varrho(b,\delta_b)$, non-negative and vanishes at $q$, so it has a local minimum at that point. Therefore its differential vanishes at $q$, which implies that $(df_b)_q=(df_a)_q$.
Now let $\sigma$ be a differentiable curve in $M$ with $\sigma(0)=q$. Then, taking into account (i) and (ii), we find that
\[
\lim_{t\to0}\frac{r\circ\sigma(t)-r\circ \sigma(0)}t \geq\lim_{t\to0}\frac{f_b\circ\sigma(t)-f_b\circ \sigma(0)}t=(df_b)_q(\dot\sigma(0)),
\]
\[
\lim_{t\to0}\frac{r\circ\sigma(t)-r\circ \sigma(0)}t \leq\lim_{t\to0}\frac{f_a\circ\sigma(t)-f_a\circ \sigma(0)}t=(df_a)_q(\dot\sigma(0)),
\]
therefore $r$ is differentiable at $q$.
\end{proof}

\medskip

\noindent{\bf Bernadett Aradi}\\
{\it Institute of Mathematics,\\
MTA-DE Research Group ``Equations, Functions and Curves''\\
Hungarian Academy of Sciences and University of Debrecen\\
H-4010 Debrecen, P.O. Box 12, Hungary}\\
E-mail: \verb"bernadett.aradi@science.unideb.hu"\\
\medskip

\noindent{\bf D\'avid Csaba Kert\'esz}\\
{\it Institute of Mathematics, University of Debrecen\\
H-4010 Debrecen, P.O. Box 12, Hungary}\\
E-mail: \verb"kerteszd@science.unideb.hu"\\

\begin{thebibliography}{99}
\bibitem{BCS00:intr}
{D.~Bao, S.-S. Chern and Z.~Shen}, {\it An Introduction to Riemann-Finsler Geometry}, Springer-Verlag (New York, 2000).

\bibitem{Ber00:subm}
{V.~N.~Berestovskii and L.~Guijarro}, A Metric Characterization of Riemannian Submersions, {\it Ann. Global Anal. Geom.}, {\bf 18}(6) (2000), 577--588.

\bibitem{Bric65:diff}
{F.~Brickell}, On the differentiability of affine and projective transformations, {\it Proc. Amer. Math. Soc.}, {\bf 16} (1965), 567-–574.

\bibitem{BusMay48}
{H.~Busemann and W.~Mayer}, On the foundations of calculus of variations, {\it Trans. Amer. Math. Soc.}, {\bf 49} (1948), 173--198.

\bibitem{Deng12:hom}
{S.~Deng}, {\it Homogeneous Finsler Spaces}, Springer (2012).

\bibitem{DeHo02:isom}
{S.~Deng and Z.~Hou}, The group of isometries of a Finsler space, {\it Pacific J. Math.}, {\bf 207} (2002), 149--155.

\bibitem{GHV1}
{W.~Greub, S.~Halperin and R.~Vanstone}, {\it Connections, Curvature, and Cohomology}, Vol.~I, Academic Press  (New York, 1972).

\bibitem{Lang95:diff}
{S.~Lang}, {\it Differential and Riemannian Manifolds}, Springer-Verlag (New York, 1995).

\bibitem{LibMo}
{H.~Libing and X.~Mo}, Geodesics of Invariant Finsler Metrics on a Lie Group, {\tt http://www.math.pku.edu.cn:8000/var/preprint/649.pdf}.

\bibitem{Lyt}
{A.~Lytchak}, Differentiation in metric spaces, {\it St. Petersburg Math. J.}, {\bf 16} (2004), 1017–-1041.

\bibitem{ONeill}
{B.~O'Neill}, {\it Semi-Riemannian Geometry}, Academic Press (New York, 1983).

\bibitem{Pat05}
{M.~Patr\~ao}, Homotheties and isometries of metric spaces, {\it Matem\'atica Contempor\^anea}, {\bf 29} (2005), 79--97.

\bibitem{PSz}
{J.~P\'ek and J.~Szilasi}, Automorphisms of Ehresmann Connections, {\it Acta Math. Hungar.}, \textbf{123} (2009), 379--395.

\bibitem{Pet00:Riem}
{P.~Petersen}, {\it Riemannian Geometry}, Springer-Verlag (New York, 1998).

\bibitem{Shen01:Spra}
{Z.~Shen}, {\it Differential Geometry of Spray and Finsler Spaces}, Kluwer Academic Publishers (Dordrecht, 2001).

\bibitem{SzLK11}
{J.~Szilasi, R.~L. Lovas, and D.~\relax{Cs}. Kert\'esz}, Several Ways to a Berwald Manifold -- and Some Steps Beyond, {\it Extracta Math.}, {\bf 26} (2011),  89--130.

\bibitem{SzV}
{J.~Szilasi and Sz.~Vattam\'any}, On the Finsler-metrizabilities of spray manifolds, {\it Period. Math. Hungar.}, {\bf 44}(1) (2002), 81--100.

\bibitem{SziZ10:diss}
{Z.~Szilasi}, {\it On the Projective Theory of Sprays with Applications to Finsler Geometry}, PhD Thesis (Debrecen, 2010), {\tt http://arxiv.org/abs/0908.4384}.

\bibitem{Trab37:fund}
{R.~E. Traber}, A fundamental lemma on normal coordinates and its applications, {\it Q. J. Math.}, {\bf 8} (1937), 142--147.

\bibitem{Whit33:totnor}
{J.~H.~C.~Whitehead}, Convex regions in the geometry of paths -- addendum, {\it Q. J. Math.}, {\bf 4} (1933), 226--227.
\end{thebibliography}
\end{document}